\newtheorem{prethm}{{\bf Theorem}}
\newenvironment{thm}{\begin{prethm}{\hspace{-0.5
               em}{\bf.}}}{\end{prethm}}
\newtheorem{prepro}[prethm]{Proposition}
\newtheorem{prelem}[prethm]{Lemma}
\newenvironment{lem}{\begin{prelem}{\hspace{-0.5
               em}{\bf.}}}{\end{prelem}}
\newtheorem{precor}[prethm]{Corollary}
\newtheorem{prerem}[prethm]{{\bf Remark}}
\newtheorem{preexample}{{\bf Example}}
\newtheorem{preproof}{{\bf Proof.}}
\newenvironment{proof}[1]{\begin{preproof}{\rm
               #1}\hfill{$\Box$}}{\end{preproof}}
\newcommand{\noi}{\noindent}
\newcommand{\te}{{\theta}}
\newcommand{\x}{{\bf x}}
\newcommand{\w}{{\bf w}}
\newcommand{\g}{\geqslant}
\newcommand{\lee}{\leqslant}
\newcommand{\e}{{\bf e}}
\newcommand{\s}{{\cal S}}
\newcommand{\la}{\lambda}
\newcommand{\al}{\alpha}
\newcommand{\be}{\beta}
\renewcommand{\thefootnote}
\title{On eigenvalues of Seidel matrices and Haemers' conjecture}
\author{ Ebrahim Ghorbani\\ [.3cm]
{\small\sl Department of Mathematics, K.N. Toosi University of Technology,}\\
{\small\sl P. O. Box 16315-1618, Tehran, Iran}\\
{\small\sl School of Mathematics, Institute for Research in Fundamental
Sciences (IPM),}\\{\small\sl P.O. Box
19395-5746, Tehran, Iran }
\\[.3cm]{
 $\mathsf{e\_ghorbani@ipm.ir}$ }}
\date{}
\begin{document}
\maketitle

\begin{abstract}
For a graph $G$, let $S(G)$ be the Seidel matrix of $G$ and $\te_1(G),\ldots,\te_n(G)$ be the eigenvalues of $S(G)$.
The Seidel energy of $G$ is defined as $|\te_1(G)|+\cdots+|\te_n(G)|$.
Willem Haemers conjectured that the Seidel energy of any graph with $n$ vertices is at least $2n-2$, the
 Seidel energy of the complete graph with $n$ vertices.
Motivated by this conjecture, we prove that for any $\al$ with $0<\al<2$, $|\te_1(G)|^\al+\cdots+|\te_n(G)|^\al\g (n-1)^\al+n-1$ if and only if $|{\rm det}\,S(G)|\g n-1$.  This, in particular, implies the Haemers' conjecture for all graphs $G$ with $|{\rm det}\,S(G)|\g n-1$.

\vspace{3mm}
\noindent {\em AMS Classification}: 05C50\\
\noindent{\em Keywords}: Seidel matrix, graph eigenvalues, Seidel energy, KKT method
\end{abstract}

\section{Introduction}

Let $G$ be a simple graph with vertex set $\{v_1,\ldots,v_n\}$.  The {\em Seidel matrix} of $G$ is an $n\times n$ matrix $S(G)=(s_{ij})$
where $s_{11}=\cdots=s_{nn}=0$ and for $i\ne j$,
$s_{ij}$ is $-1$ if $v_i$ and $v_j$ are adjacent, and is
$1$ otherwise. The {\em Seidel energy}  of $G$, denoted by $\s(G)$, is defined as the sum of the absolute values of the eigenvalues of $S(G)$.

Considering the complete graph $K_n$, its Seidel matrix is $I-J$. Hence the eigenvalues of $S(K_n)$ are $1-n$ and $1$ (the latter with multiplicity $n-1$).
So $\s(K_n)=2n-2$. Haemers conjectured that this is the smallest Seidel energy of an $n$-vertex graph:

\noindent{\bf Conjecture} (Haemers \cite{ham}){\bf.} {\em For any graph $G$ on $n$ vertices, $\s(G)\g\s(K_n)$.}

We show that the conjecture is true if $|{\rm det}\,S(G)|\g|{\rm det}\,S(K_n)|=n-1$. To be more precise, we prove the following more general statement which makes
the main result of the present paper.

\begin{thm}\label{main}  Let $G$ be a graph with $n$ vertices and let $\te_1,\ldots,\te_n$ be the eigenvalues of $S(G)$.
Then the following are equivalent:
\begin{itemize}
  \item[\rm(i)] $|{\rm det}\,S(G)|\g n-1;$
  \item[\rm(ii)] for any $0<\al<2$,
\begin{equation}\label{maineq}
    |\te_1|^\al+\cdots+|\te_n|^\al\g(n-1)^\al+(n-1).
\end{equation}
\end{itemize}
\end{thm}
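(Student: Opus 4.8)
The plan is to pass to the squared eigenvalues. Write $y_i=\te_i^2\g0$ and $\be=\al/2\in(0,1)$. Since $S(G)$ has zero diagonal and $\pm1$ off-diagonal entries, $\tr S(G)^2=n(n-1)$, so $\sum_i y_i=n(n-1)$ is fixed; moreover $\prod_i y_i=(\det S(G))^2$. In these terms \eqref{maineq} becomes $\sum_i y_i^\be\g(n-1)^{2\be}+(n-1)$, while (i) reads $\prod_i y_i\g(n-1)^2$. The theorem thus reduces to a statement about a vector $y$ on the simplex $\sum_i y_i=n(n-1)$, $y\g0$: the power-sum inequality holds for every $\be\in(0,1)$ if and only if $\prod_i y_i\g(n-1)^2$. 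The reference point throughout is the $K_n$ configuration $y=((n-1)^2,1,\dots,1)$, for which both sides are equalities.

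For (ii)$\Rightarrow$(i) I would differentiate at $\be=0^+$. Set $H(\be)=\sum_i y_i^\be-(n-1)^{2\be}-(n-1)$. If some $y_i=0$, then $H(\be)\to\mathrm{rank}\,S(G)-n<0$ as $\be\to0^+$, contradicting (ii); hence (ii) forces $S(G)$ nonsingular and all $y_i>0$. Then $H$ extends continuously with $H(0)=0$, and since $H(\be)\g0$ for small $\be>0$ we obtain $H'(0^+)\g0$. A direct computation gives $H'(0^+)=\sum_i\ln y_i-2\ln(n-1)=\ln\big(\prod_i y_i/(n-1)^2\big)$, whence $\prod_i y_i\g(n-1)^2$, i.e.\ $|\det S(G)|\g n-1$.

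For (i)$\Rightarrow$(ii), fix $\be\in(0,1)$ and minimize $\Phi(y)=\sum_i y_i^\be$ over the compact set $\Omega=\{y\g0:\sum_i y_i=n(n-1),\ \prod_i y_i\g(n-1)^2\}$; since the eigenvalues of $S(G)$ lie in $\Omega$ by (i), it suffices to show $\min_\Omega\Phi=(n-1)^{2\be}+(n-1)$. As $\log\prod_i y_i$ is concave, $\Omega$ is convex and every point of $\Omega$ has all coordinates positive. If the product constraint were inactive at a minimizer, stationarity of the concave $\Phi$ under $\sum_i y_i=n(n-1)$ would force $y_i\equiv n-1$, which is the maximum of $\Phi$, not the minimum; so a minimizer lies on $\prod_i y_i=(n-1)^2$. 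There I would invoke KKT: the Lagrangian stationarity reads $\be y_i^{\be-1}-\mu(\prod_j y_j)/y_i=\la$ for all $i$, i.e.\ $\psi(y_i)=\la$ with $\psi(t)=\be t^{\be-1}-\mu(n-1)^2t^{-1}$. Since $\be\in(0,1)$ the function $\psi$ is unimodal, so $\psi(t)=\la$ has at most two roots; hence at any critical point the $y_i$ take at most two distinct positive values.

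It remains to minimize $\Phi$ over two-valued configurations, $k$ coordinates equal to $a$ and $n-k$ equal to $b$, subject to $ka+(n-k)b=n(n-1)$ and $a^k b^{n-k}=(n-1)^2$. This finite comparison is the main obstacle. I expect first to show that no point of $\Omega$ has a coordinate exceeding $(n-1)^2$ (an AM--GM estimate on the remaining coordinates, tight exactly at the $K_n$ point), which pins down the extreme entry, and then to reduce the two-valued family for each $k$ to a single-variable function and verify that its minimum over $\be\in(0,1)$ and over $k\in\{1,\dots,n-1\}$ is attained at $k=1$, $a=(n-1)^2$, $b=1$, giving precisely $(n-1)^{2\be}+(n-1)$. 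Note that the problem is not convex ($\Phi$ is concave), so KKT is only necessary; there is no single supporting-line/duality argument tangent at both $1$ and $(n-1)^2$, and this explicit comparison across multiplicities cannot be bypassed.
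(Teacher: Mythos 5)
Your reduction to $y_i=\te_i^2$, the direction (ii)$\Rightarrow$(i) via $H'(0^+)$ (equivalent to the paper's power-mean limit), and the structural analysis of the relaxed problem are all sound. In fact your leaner constraint set works: the cap you promise to derive is genuinely a consequence of $\Omega$ alone, since if $y_1=T$ then AM--GM gives $\prod_{i\g2}y_i\lee\bigl((n(n-1)-T)/(n-1)\bigr)^{n-1}$, and $T\mapsto T\bigl((n(n-1)-T)/(n-1)\bigr)^{n-1}$ is decreasing for $T>n-1$ and equals $(n-1)^2$ exactly at $T=(n-1)^2$; so no feasible coordinate exceeds $(n-1)^2$ (this replaces the paper's spectral constraints $\max_i\te_i(G)^2\lee(n-1)^2$ and the fourth-moment bound, which the paper imports as Lemma~3 and carries as extra constraints). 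Your unimodality argument for $\psi(t)=\be t^{\be-1}-\mu(n-1)^2t^{-1}$ is correct and, if anything, cleaner than the paper's corresponding step, where the fourth-moment multiplier adds a quadratic term to the stationarity equation. A minor omission: you invoke KKT without checking a constraint qualification; since your feasible set is convex and $(n-1,\ldots,n-1)$ is a Slater point, this is a one-line fix (the paper instead verifies MFCQ directly in its Lemma~4).

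The genuine gap is the step you yourself flag: the comparison of all feasible two-valued configurations $(a^{(k)},b^{(n-k)})$ against $((n-1)^2,1^{(n-1)})$ is the entire analytic content of the theorem, and you leave it as an expectation. Worse, your closing claim that ``this explicit comparison across multiplicities cannot be bypassed'' is exactly wrong: the paper bypasses it in one stroke with Bennett's lemma (\cite{ben}, the paper's Lemma~5), which says that if $\al+\be=\nu+\omega$, $\al a+\be b=\nu c+\omega d$, $\max\{a,b\}\lee\max\{c,d\}$, and $a^\al b^\be\g c^\nu d^\omega$, then $\al a^p+\be b^p\g\nu c^p+\omega d^p$ for all $0\lee p\lee1$. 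Taking $c=(n-1)^2$, $d=1$, $\nu=1$, $\omega=n-1$, every two-valued point of $\Omega$ satisfies all four hypotheses: equal counts and equal sums are the constraint $g$, the max condition is precisely the cap you derived, and the product condition is membership in $\Omega$ (note the lemma needs only $a^kb^{n-k}\g(n-1)^2$, not equality, and places no integrality restriction on the multiplicities). This settles all $k$ and all $\be\in(0,1)$ uniformly, with the all-equal case $y_i\equiv n-1$ handled separately since $n(n-1)^p>(n-1)^{2p}+n-1$ reduces to $(t-1)(t-(n-1))<0$ for $t=(n-1)^p$. Without Bennett's lemma or a verified substitute for it, your argument establishes only the shape of candidate minimizers, not the inequality; so as it stands the proposal is an incomplete proof whose missing piece is the paper's key imported lemma.
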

The implication `(ii)$\Rightarrow$(i)' is strightforward in view of the fact that
$$\lim_{\al\to0^+}\left(\frac{|\te_1|^\al+\cdots+|\te_n|^\al}{n}\right)^\frac{1}{\al}=|\te_1\cdots\te_n|^\frac{1}{n}.$$
We prove the implication `(i)$\Rightarrow$(ii)' in Section~3. The proof is based on KKT method in nonlinear programming.
 We briefly explain this method in Section~2.



For more results of the same flavor as (\ref{maineq}) on Laplacian and signless Laplacian eigenvalues of graphs see \cite{agko1,agko2}.

\section{Karush--Kuhn--Tucker (KKT) conditions}

In nonlinear programming, the Karush--Kuhn--Tucker (KKT) conditions are
necessary for a local solution to a minimization problem provided that some regularity conditions are
satisfied. Allowing inequality constraints, the KKT approach to nonlinear programming generalizes the method of
Lagrange multipliers, which allows only equality constraints. For details see \cite{book}.

Consider the following optimization problem:
\begin{quote}
   Minimize $f(\x)$\\
   subject to:\\
   $~~~~~g_j(\x)=0$,~ for $j\in J$,\\
   $~~~~~h_i(\x)\lee0$,~ for $i\in I$,
\end{quote}
where $I$ and $J$ are finite sets of indices.
Suppose that the objective function $f:\mathbb{R}^n\to\mathbb{R}$ and the constraint functions $g_j:\mathbb{R}^n\to\mathbb{R}$ and
$h_i:\mathbb{R}^n\to\mathbb{R}$ are continuously differentiable at a point $\x^*$. If $\x^*$ is a local minimum that satisfies some regularity conditions, then there exist constants $\mu_i$ and $\la_j$, called
KKT multipliers, such that
\begin{align*}
\nabla f(\x^*)+\sum_{j\in J}&\,\mu_j\nabla g_j(\x^*)+\sum_{i\in I}\la_i\nabla h_i(\x^*)={\bf0}\\
  g_j(\x^*)&=0,~~~\hbox{for all $j\in J$},\\
   h_i(\x^*)&\lee0,~~~\hbox{for all $i\in I$},\\
    \la_i&\g0,~~~\hbox{for all $i\in I$},\\
     \la_ih_i(\x^*)&=0,~~~\hbox{for all $i\in I$}.
\end{align*}

In order for a minimum point to satisfy the above KKT conditions, it should satisfy some regularity conditions (or constraint qualifications).  The one which suits our problem is the Mangasarian--Fromovitz constraint qualification (MFCQ).
Let $I(\x^*)$ be the set of indices of active inequality constraints at $\x^*$, i.e.
$I(\x^*)=\left\{i\in I\mid  h_i(\x^*)=0\right\}$. We say that MFCQ holds at a feasible point $\x^*$
if the set of gradient vectors
$\{\nabla g_j(\x^*)\mid j\in J\}$ is linearly independent and that there exists $\w\in\mathbb{R}^n$ such that
\begin{align*}
 \nabla g_j(\x^*)\w^\top&=0,~~~ \hbox{for all $j\in J$},\\
  \nabla h_i(\x^*)\w^\top&<0,~~~  \hbox{for all $i\in I(\x^*)$}.
  \end{align*}

\begin{thm} {\rm(\cite{mf}, see also \cite[Section 12.6]{book})} If a local minimum $\x^*$ of the function  $f(\x)$ subject to the constraints $g_j(\x)=0$, for $j\in J$, and   $h_i(\x)=0$, for $i\in I$, satisfies MFCQ, then it satisfies the KKT conditions.
\end{thm}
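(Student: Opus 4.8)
The plan is to prove the stated direction directly: local optimality together with MFCQ forbids any first-order feasible descent direction, and a theorem of the alternative then manufactures the multipliers. Throughout I would abbreviate the gradients $\mathbf{a}_j=\nabla g_j(\x^*)$ for $j\in J$ and $\mathbf{b}_i=\nabla h_i(\x^*)$ for $i\in I(\x^*)$, together with $\mathbf{c}=\nabla f(\x^*)$, all viewed as row vectors as in the paper's convention, and I would write $\mathbf{d}$ for a generic direction.

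First I would establish the geometric heart of the matter: every $\mathbf{d}$ satisfying $\mathbf{a}_j\mathbf{d}^\top=0$ for all $j\in J$ and $\mathbf{b}_i\mathbf{d}^\top<0$ for all $i\in I(\x^*)$ obeys $\mathbf{c}\,\mathbf{d}^\top\g0$. Because the $\mathbf{a}_j$ are linearly independent (the first half of MFCQ), the common zero set of the $g_j$ is a $C^1$ manifold near $\x^*$ whose tangent space is $\{\mathbf{d}:\mathbf{a}_j\mathbf{d}^\top=0\ \forall j\}$; the implicit function theorem then supplies a feasible arc $\x(t)$ with $\x(0)=\x^*$ and $\dot\x(0)=\mathbf{d}$ on which every $g_j$ vanishes identically. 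A first-order expansion gives $h_i(\x(t))=t\,\mathbf{b}_i\mathbf{d}^\top+o(t)<0$ for small $t>0$ at the active indices, while the inactive constraints stay negative by continuity, so $\x(t)$ is genuinely feasible. Local minimality of $\x^*$ then forces $\frac{d}{dt}f(\x(t))\big|_{t=0^+}=\mathbf{c}\,\mathbf{d}^\top\g0$.

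Next I would remove the strictness, and this is where the MFCQ vector $\w$ earns its keep. Given any $\mathbf{d}$ with $\mathbf{a}_j\mathbf{d}^\top=0$ and $\mathbf{b}_i\mathbf{d}^\top\lee0$, the perturbed direction $\mathbf{d}+\varepsilon\w$ satisfies the equality conditions and the strict active inequalities for every $\varepsilon>0$; applying the previous step and letting $\varepsilon\to0^+$ yields $\mathbf{c}\,\mathbf{d}^\top\g0$ here as well. Equivalently, the system
$$\mathbf{a}_j\mathbf{d}^\top=0\ (j\in J),\qquad \mathbf{b}_i\mathbf{d}^\top\lee0\ (i\in I(\x^*)),\qquad \mathbf{c}\,\mathbf{d}^\top<0$$
has no solution $\mathbf{d}$.

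Finally I would feed this inconsistency into the Motzkin/Farkas theorem of the alternative, which converts it into scalars $\mu_j\in\mathbb{R}$ and $\la_i\g0$ with $\mathbf{c}+\sum_{j\in J}\mu_j\mathbf{a}_j+\sum_{i\in I(\x^*)}\la_i\mathbf{b}_i=\mathbf{0}$. Extending by $\la_i=0$ on the inactive indices $I\setminus I(\x^*)$ then recovers exactly the stationarity identity, the sign constraints $\la_i\g0$, and complementary slackness $\la_ih_i(\x^*)=0$ (the latter holds because $h_i(\x^*)=0$ on the active set and $\la_i=0$ off it), which together are the KKT conditions. The hard part is the first step: building a truly feasible arc with a prescribed initial velocity needs both halves of MFCQ at once, namely linear independence of the $\mathbf{a}_j$ to invoke the implicit function theorem on the equality manifold, and the interior direction $\w$ to keep the active inequalities strictly satisfied during the relaxation. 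Absent such a qualification the tangent cone may be strictly smaller than its linearization, and the concluding separation argument collapses.
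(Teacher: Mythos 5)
You should first note that the paper itself gives no proof of this statement: it is imported as a known theorem from \cite{mf} (see also \cite[Section 12.6]{book}), so there is no internal argument to compare against, and the only question is whether your blind proof is sound --- it is, being essentially the classical textbook argument. Your three steps are correctly ordered: (1) for a direction $\mathbf{d}$ with $\nabla g_j(\x^*)\mathbf{d}^\top=0$ for all $j$ and $\nabla h_i(\x^*)\mathbf{d}^\top<0$ on the active set, the linear-independence half of MFCQ lets the implicit function theorem produce a $C^1$ arc inside $\{g_j=0,\ j\in J\}$ with initial velocity $\mathbf{d}$, and the first-order expansion of the $h_i$ shows the arc is feasible for small $t>0$, whence local minimality forces $\nabla f(\x^*)\mathbf{d}^\top\g0$; (2) perturbing by $\varepsilon\w$ and letting $\varepsilon\to0^+$ extends this to directions with $\nabla h_i(\x^*)\mathbf{d}^\top\lee0$, which is exactly the assertion that the linearized cone contains no descent direction; (3) the generalized Farkas/Motzkin alternative converts that inconsistency into $\mu_j\in\mathbb{R}$ and $\la_i\g0$ on $I(\x^*)$, and extending by $\la_i=0$ off the active set delivers stationarity, dual feasibility, and complementary slackness in one stroke. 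Two small repairs are worth recording. First, the statement as printed contains a typo --- the inequality constraints should read $h_i(\x)\lee0$, not $h_i(\x)=0$ --- and you have (correctly, if tacitly) proved the inequality version, which is the one the paper actually invokes in Theorem~\ref{ep}. Second, your implicit-function-theorem step requires the $g_j$ to be continuously differentiable in a neighborhood of $\x^*$, not merely ``at $\x^*$'' as Section~2 loosely states; this is the standard hypothesis and costs nothing here, since the paper's constraint functions are polynomials. What your proof buys over the paper's bare citation is self-containedness, and it makes visible precisely where each half of MFCQ is consumed --- linear independence of the equality gradients for the feasible arc, the vector $\w$ for closing up the strict inequalities --- which dovetails nicely with the explicit verification of MFCQ carried out in Lemma~\ref{mfcq}.
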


\section{Proofs}


In this section we prove the non-trivial part of Theorem~\ref{main}, that is the implication `(i)$\Rightarrow$(ii)'.
We formulate this as an optimization problem. To this end, we need to come up with appropriate constraints.
The main constraint is made by the assumption $|{\rm det}\,S(G)|\g n-1$. The other ones are obtained  by the following straightforward lemma.

\begin{lem} For any graph $G$ with $n$ vertices, we have
\begin{itemize}
  \item[\rm(i)]  $\te_1(G)^2+\cdots+\te_n(G)^2=(n-1)^2+n-1;$
  \item[\rm(ii)] $\te_1(G)^4+\cdots+\te_n(G)^4\lee \te_1(K_n)^4+\cdots+\te_n(K_n)^4=(n-1)^4+n-1;$
  \item[\rm(iii)]  ${\displaystyle\max_{1\lee i\lee n}}\te_i(G)^2\lee{\displaystyle\max_{1\lee i\lee n}}\te_i(K_n)^2=(n-1)^2$.
\end{itemize}
\end{lem}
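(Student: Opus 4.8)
The plan is to prove all three parts by rewriting each statement as a power sum of the eigenvalues, i.e.\ as a trace of a power of $S=S(G)$, and then estimating that trace directly from the combinatorial structure of $S$ (a symmetric matrix with zero diagonal and off-diagonal entries $\pm1$).

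For (i), I would observe that $\te_1^2+\cdots+\te_n^2=\tr(S^2)$, and that the $i$-th diagonal entry of $S^2$ equals $\sum_k s_{ik}^2=n-1$, since row $i$ has exactly $n-1$ off-diagonal entries, each $\pm1$. Summing over $i$ gives $\tr(S^2)=n(n-1)=(n-1)^2+(n-1)$. For (iii), I would use that for the symmetric matrix $S$ the quantity $\max_i\te_i^2$ equals $\|S\|_2^2$, the square of the spectral radius. Bounding this via $\|S\|_2\le\sqrt{\|S\|_1\|S\|_\infty}=\|S\|_\infty$ (the last equality because $S$ is symmetric) and noting $\|S\|_\infty=\max_i\sum_j|s_{ij}|=n-1$, I obtain $\max_i\te_i^2\le(n-1)^2$; for $K_n$ the all-ones vector shows $\te=1-n$ is an eigenvalue, so equality holds there.

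Part (ii) is the crux. I would write $\te_1^4+\cdots+\te_n^4=\tr(S^4)=\tr\big((S^2)^2\big)=\sum_{i,j}(S^2)_{ij}^2$, using symmetry of $S^2$. By the computation in (i) the diagonal terms each equal $(n-1)^2$, contributing $n(n-1)^2$. For an off-diagonal term with $i\neq j$, the entry $(S^2)_{ij}=\sum_{k\neq i,j}s_{ik}s_{kj}$ is a sum of exactly $n-2$ terms each equal to $\pm1$ (the summands $k=i$ and $k=j$ vanish because the diagonal of $S$ is zero), so $|(S^2)_{ij}|\le n-2$ and $(S^2)_{ij}^2\le(n-2)^2$. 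Since there are $n(n-1)$ ordered off-diagonal pairs, summing gives $\tr(S^4)\le n(n-1)^2+n(n-1)(n-2)^2$. The last step is the arithmetic identity $n(n-1)^2+n(n-1)(n-2)^2=(n-1)^4+(n-1)$, verifiable by expansion; this also identifies the bound with $\tr(S(K_n)^4)$, with equality forced precisely when every off-diagonal entry of $S^2$ has absolute value $n-2$, as happens for $K_n$ where $S(K_n)^2=I+(n-2)J$.

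The only genuinely non-obvious point is the sharpness of the elementary estimate $|(S^2)_{ij}|\le n-2$. A priori one might fear that the accumulated slack over the $n(n-1)$ off-diagonal pairs overshoots the target $(n-1)^4+(n-1)$, but the closing identity shows the naive termwise bound is already tight, so no finer estimate is required. I therefore expect the main (and modest) obstacles to be bookkeeping the degenerate indices $k=i,j$ correctly in the off-diagonal sum and verifying the final polynomial identity.
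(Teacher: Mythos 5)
Your proof is correct; the paper states this lemma without proof (calling it straightforward), and your trace computations---$\tr(S^2)=n(n-1)$ entrywise for (i), the bound $\|S\|_2\lee\sqrt{\|S\|_1\|S\|_\infty}=n-1$ for (iii), and $\tr(S^4)=\sum_{i,j}(S^2)_{ij}^2$ with $|(S^2)_{ij}|\lee n-2$ off the diagonal for (ii)---are exactly the intended elementary argument. The closing identity checks out, since $n(n-1)^2+n(n-1)(n-2)^2=n(n-1)(n^2-3n+3)=(n-1)\left(n^3-3n^2+3n\right)=(n-1)^4+(n-1)$, so your termwise bound is tight and is attained by $K_n$, where $S(K_n)^2=I+(n-2)J$.
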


Now, we can describe our problem as the minimization
of the function  $$f(\x):=x_1^p+\cdots+x_n^p,~~~ \x=(x_1,\ldots,x_n)\in\mathbb{R}^n,$$
with fixed $0<p<1$, subject to the constraints:
\begin{align}
    g(\x)&:=x_1+\cdots+x_n-n(n-1)=0,\label{g=}\\
    h(\x)&:=x_1^2+\cdots+x_n^2-(n-1)^4-(n-1)\lee0,\label{h<}\\
    d(\x)&:=(n-1)^2-{\textstyle\prod_{i=1}^nx_i}\lee0,\label{prod}\\
    k_i(\x)&:=x_i-(n-1)^2\lee0,~~\hbox{for $i=1,\ldots,n$},\\
    l_i(\x)&:=\xi-x_i\lee0,~~\hbox{for $i=1,\ldots,n$}\label{li},
\end{align}
where $\xi>0$ is fixed so that if for some $i$, $x_i=\xi$, then $\prod_{i=1}^nx_i<(n-1)^2$.

Theorem~\ref{main} now follows if we prove that the minimum   of $f(\x)$ subject to (\ref{g=})--(\ref{li}) is $(n-1)^{2p}+n-1$.

\begin{lem}\label{mfcq} Let $\e$ be a local minimum of $f(\x)$ subject to the constraints (\ref{g=})--(\ref{li}). Then $\e$ satisfies MFCQ.
\end{lem}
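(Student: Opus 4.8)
The plan is to verify the two requirements of MFCQ directly by exhibiting a single witness direction $\w$. First I would record the gradients at $\e=(e_1,\dots,e_n)$. The only equality constraint is $g$, with $\nabla g(\e)=(1,\dots,1)$; this single nonzero vector is trivially linearly independent, so the linear-independence requirement is immediate. Everything then reduces to producing $\w=(w_1,\dots,w_n)$ with $\nabla g(\e)\,\w^\top=\sum_i w_i=0$ and $\nabla c(\e)\,\w^\top<0$ for every active inequality constraint $c$. The relevant gradients are $\nabla h(\e)=2\e$ and $\nabla d(\e)=\big(-\prod_{j\ne i}e_j\big)_{i=1}^n$, while $\nabla k_i(\e)$ is the $i$-th standard unit vector and $\nabla l_i(\e)=-\nabla k_i(\e)$.

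The key idea is to choose the single equalizing direction pointing from $\e$ toward the centroid, namely $w_i:=(n-1)-e_i$. Feasibility forces $\sum_i e_i=n(n-1)$, so $\sum_i w_i=0$ and the equality condition holds automatically. Moving every coordinate toward the common mean $n-1$ should simultaneously decrease $\sum_i e_i^2$ (helping $h$), increase $\prod_i e_i$ (helping $d$) and lower any coordinate sitting at its cap $(n-1)^2$ (helping $k_i$), and I would turn each of these into a strict inequality. For $h$ (when active) a direct computation gives $\nabla h(\e)\,\w^\top=2\big(n(n-1)^2-\sum_i e_i^2\big)=2(n-1)\big(n(n-1)-(n-1)^3-1\big)<0$, since $(n-1)^3+1>n(n-1)$ for $n\g3$. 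For $k_i$ (when active, $e_i=(n-1)^2$) one has $\nabla k_i(\e)\,\w^\top=w_i=(n-1)-(n-1)^2<0$, and for $l_i$ (when active, $e_i=\xi$) $\nabla l_i(\e)\,\w^\top=\xi-(n-1)<0$; in fact the choice of $\xi$ guarantees that no $l_i$ is active at a feasible point, since $e_i=\xi$ would force $\prod_j e_j<(n-1)^2$, contradicting (\ref{prod}).

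The main obstacle is the product constraint $d$. With $P:=\prod_j e_j>0$ we have $\nabla d(\e)\,\w^\top=-P\big((n-1)\sum_i e_i^{-1}-n\big)$, so the required inequality is equivalent to $\sum_i e_i^{-1}>n/(n-1)$. When $d$ is active, $P=(n-1)^2$, and applying the AM--GM inequality to $e_1^{-1},\dots,e_n^{-1}$ gives $\sum_i e_i^{-1}\g nP^{-1/n}=n(n-1)^{-2/n}$; since $(n-1)^{1-2/n}>1$ for $n\g3$, this exceeds $n/(n-1)$ strictly, as needed. Thus the one direction $\w$ settles all active inequalities at once, establishing MFCQ for $n\g3$, while the cases $n\lee2$ are degenerate (for $n=2$ the feasible region is the single point $(1,1)$) and can be checked directly.
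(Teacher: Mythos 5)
Your proof is correct, and it takes a genuinely different route from the paper's. The paper orders $e_1\g\cdots\g e_n$ and splits into two cases: if $e_1=e_n$, then by (\ref{g=}) all $e_i=n-1$ and (for $n\g3$) no inequality constraint is active, so MFCQ holds vacuously; if $e_1>e_n$, it uses the sparse direction $\w=(-1,0,\ldots,0,1)$, which gives $\nabla h(\e)\,\w^\top=2(e_n-e_1)<0$, $\nabla d(\e)\,\w^\top=P(e_n-e_1)/(e_1e_n)<0$, and $w_1=-1<0$ for an active cap $k_1$. Your centroid direction $w_i=(n-1)-e_i$ is a single uniform witness that needs no ordering and no case split, verified by direct computation for $h$ and by AM--GM on the reciprocals for $d$. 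What your route buys: it establishes MFCQ at \emph{every} feasible point (local minimality is never used), and it automatically covers the scenario of several simultaneously active caps $k_i$ --- a point the paper's argument leaves implicit, since if two coordinates sat at $(n-1)^2$ the direction $(-1,0,\ldots,0,1)$ would give $\nabla k_2(\e)\,\w^\top=0$; one must invoke (\ref{g=}) and positivity to see that at most one coordinate can equal $(n-1)^2$ when $n\g3$, so that only $k_1$ can be active after sorting. The cost is the extra computation for $d$, which the paper's two-coordinate direction dispatches in one line. Also, if all $e_i=n-1$ your $\w$ is the zero vector, but that is harmless since no inequality constraint is active there, exactly as in the paper's first case.

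One caveat, shared by both proofs: the lemma tacitly assumes $n\g3$. For $n=2$ the statement actually fails: the unique feasible point is $(1,1)$, where $h$, $d$, $k_1$, $k_2$ are all active, and any $\w$ with $w_1+w_2=0$ gives $\nabla h(\e)\,\w^\top=2(w_1+w_2)=0$, never $<0$. So MFCQ cannot be ``checked directly'' at $n=2$ as your last sentence suggests --- though the ultimate conclusion survives there because the feasible set is a singleton. Your parenthetical at least flags the degeneracy; the paper's case $e_1=e_n$ (which claims no active inequality constraints) silently overlooks it.
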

\begin{proof}{Let $\e=(e_1,\ldots,e_n)$. With no loss of generality assume that $e_1\g\cdots\g e_n$.
If $e_1=e_n$, then, in view of (\ref{g=}), all $e_i$ are equal to $n-1$. In this case, in none of the inequality constraints (\ref{h<})--(\ref{li}) 
equality occurs for $\e$ and so we are done.
If $e_1>e_n$,  then MFCQ is fulfilled by setting $\w=(-1,0,\ldots,0,1)$.
}\end{proof}

\begin{lem}\label{bennet} {\rm(\cite{ben})} Suppose  $\al,\be,\nu,\omega, a, b, c, d$ are positive numbers and that
\begin{align*}
\al+\be&=\nu+\omega,\\
   \al a+\be b&=\nu c+\omega d,\\
   \max\{a, b\}&\lee\max\{c, d\},\\
   a^\al b^\be&\g c^\nu d^\omega.
\end{align*}
Then the inequality
$$\al a^p+\be b^p\g\nu c^p+\omega d^p$$
holds for  $0\lee p\lee1$.
\end{lem}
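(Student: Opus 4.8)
The plan is to recast the claim as a statement about the single real-variable function
$\vf(p):=\al a^p+\be b^p-\nu c^p-\omega d^p$, $p\in\mathbb{R}$, and to show that $\vf(p)\g0$ throughout $[0,1]$, since $\vf(p)\g0$ is exactly the asserted inequality. As a harmless first step I would order each pair, say $a\g b$ and $c\g d$; this costs nothing because all four hypotheses and the conclusion are symmetric under simultaneously interchanging $(a,\al)$ with $(b,\be)$ and $(c,\nu)$ with $(d,\omega)$. With this ordering the third hypothesis $\max\{a,b\}\lee\max\{c,d\}$ reads simply $a\lee c$, so $c=\max\{a,b,c,d\}$. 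I note in passing that a tempting route through mean-preserving spreads (convex ordering) does not work: the hypotheses permit the two supports $[b,a]$ and $[d,c]$ to be \emph{staggered} rather than nested, so no majorization theorem applies directly and one must argue analytically.

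The four hypotheses translate into four facts about $\vf$. The first, $\al+\be=\nu+\omega$, says $\vf(0)=0$; the second, $\al a+\be b=\nu c+\omega d$, says $\vf(1)=0$. Differentiating, $\vf'(0)=\al\ln a+\be\ln b-\nu\ln c-\omega\ln d=\ln\bigl(a^\al b^\be\bigr)-\ln\bigl(c^\nu d^\omega\bigr)$, so the fourth hypothesis $a^\al b^\be\g c^\nu d^\omega$ is precisely $\vf'(0)\g0$. Finally, because $c$ is (generically) the strict largest base and its coefficient $-\nu$ is negative, the third hypothesis forces $\vf(p)\to-\infty$ as $p\to+\infty$.

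The engine is the classical bound that a real exponential sum $\sum_{i=1}^k\gamma_i t_i^p$ with distinct positive bases $t_i$ and nonzero coefficients $\gamma_i$ has at most $k-1$ real zeros, counted with multiplicity (divide by $t_1^p$ and apply Rolle's theorem $k-1$ times); since $\vf$ has at most four distinct bases it has at most three such zeros. Suppose, for contradiction, that $\vf(p_0)<0$ for some $p_0\in(0,1)$, and take first the principal case $\vf'(0)>0$. Then $\vf>0$ just to the right of $0$, so there is a zero $z_1\in(0,p_0)$ where $\vf$ passes from positive to negative. On $[p_0,\infty)$ the function starts negative at $p_0$, equals $0$ at $p=1$, and tends to $-\infty$; hence it has at least two further zeros there, counted with multiplicity — two genuine crossings if it ever becomes positive, or else a double zero where its graph merely touches the axis at $p=1$. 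Together with the zero at $0$ this exhibits at least four zeros in $[0,\infty)$, contradicting the bound of three. Therefore $\vf\g0$ on $[0,1]$.

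The main obstacle is the degenerate configurations in which the extreme bases coincide, above all $a=c$ (forced when $a\lee c$ holds with equality): then the top base carries coefficient $\al-\nu$, whose negativity — and hence the limit $\vf(p)\to-\infty$ — is no longer automatic. I would eliminate the bad sign directly. Put $\delta=\al-\nu=\omega-\be$. If $\delta>0$, the equal-means relation gives $d=\frac{\delta}{\omega}a+\frac{\be}{\omega}b$, a convex combination of $a$ and $b$, while the fourth hypothesis becomes $d\lee a^{\delta/\omega}b^{\be/\omega}$; as the right-hand side is exactly the corresponding weighted geometric mean, AM--GM forces equality, hence $a=b$, collapsing all four numbers to a single point and making $\vf\equiv0$. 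Thus a nontrivial $a=c$ necessarily has $\delta<0$, the top coefficient $\al-\nu$ is negative, $\vf$ reduces to a three-term exponential sum, and the zero-counting argument applies verbatim (now against the bound $2$). Coincidences that merely reduce the number of distinct bases (such as $a=b$) only lower the zero bound and are handled identically, while the boundary case $\vf'(0)=0$ — equality in the fourth hypothesis — is absorbed by treating $0$ as a zero of multiplicity at least two, which again produces at least four zeros against the bound.
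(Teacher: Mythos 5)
The paper offers no proof of this lemma at all: it is imported verbatim from Bennett \cite{ben}, so there is no internal argument to compare yours against; what you have written is, in effect, a reconstruction of the kind of zero-counting proof Bennett's $p$-free inequalities rest on. Judged on its own, your argument is correct. The translation of the four hypotheses into $\vf(0)=0$, $\vf(1)=0$, $\vf'(0)\g0$, and (via $\max\{a,b\}\lee\max\{c,d\}$) a negative coefficient on the largest base, is exact; the Rolle-type bound that $\sum_{i=1}^k\gamma_i t_i^p$ with distinct positive bases and nonzero coefficients has at most $k-1$ real zeros counted with multiplicity is standard and proved exactly as you indicate; and the case analysis manufactures at least four zeros against the bound of three whenever $\vf$ dips below zero in $(0,1)$, including the delicate subcases where the fourth zero is a double zero at $1$ (when $\vf\lee0$ on $[p_0,\infty)$) or at $0$ (when equality holds in the fourth hypothesis). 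Your treatment of the coincidence $a=c$ is the right move and the AM--GM rigidity argument for $\delta>0$ is correct: $d$ is simultaneously the weighted arithmetic and (at least) the weighted geometric mean of $a$ and $b$ with the same positive weights $\delta/\omega$, $\be/\omega$, forcing $a=b$ and total collapse.

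Two bookkeeping points to tighten, neither a gap in the idea. First, in the $a=c$ analysis you jump from ``$\delta>0$ collapses'' to ``a nontrivial instance has $\delta<0$,'' silently skipping $\delta=0$; that case is immediate ($\al=\nu$ and $\be=\omega$, so the mean condition gives $b=d$ and $\vf\equiv0$), but it should be stated. Second, the clause that base coincidences are ``handled identically'' deserves a line of verification in a final writeup: when terms merge, the combined coefficient may vanish or $\vf$ may degenerate to two or fewer terms, and one should check that every such configuration either forces $\vf\equiv0$ or contradicts the hypotheses (e.g. $b=d$ with $\be=\omega$ forces $a=c$ and hence full collapse). Conveniently, in all reduced cases the bound drops to $2$ or below, and the zeros at $0$ and $1$ together with one extra zero extracted from a negative interior value already give the contradiction, so the limit $\vf(p)\to-\infty$ is not even needed there --- which also disposes of any worry about the sign of the leading coefficient after merging.
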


\begin{thm}\label{ep} 
Let $\e\in\mathbb{R}^n$ satisfy the constraints (\ref{g=})--(\ref{li}). Then
$f(\e)\g(n-1)^{2p}+n-1$.
\end{thm}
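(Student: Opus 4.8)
The plan is to prove that the minimum of $f$ over the feasible set cut out by (\ref{g=})--(\ref{li}) is exactly $(n-1)^{2p}+n-1$. Since the point $\x_0:=((n-1)^2,1,\ldots,1)$ satisfies all the constraints and has $f(\x_0)=(n-1)^{2p}+n-1$, and since the feasible set is closed and bounded (each coordinate lies in $[\xi,(n-1)^2]$) and nonempty, $f$ attains a global minimum; it therefore suffices to show $f(\e)\g(n-1)^{2p}+n-1$ for a global minimizer $\e$, as the bound for an arbitrary feasible point then follows. I would split the analysis according to whether some coordinate of $\e$ sits at the upper box bound $(n-1)^2$.

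Suppose first that $e_j=(n-1)^2$ for some $j$. Then by (\ref{g=}) the remaining $n-1$ coordinates sum to $n(n-1)-(n-1)^2=n-1$, while by (\ref{h<}) their squares sum to at most $(n-1)^4+(n-1)-(n-1)^4=n-1$. Cauchy--Schwarz gives the reverse bound $\g(n-1)^2/(n-1)=n-1$, so equality holds throughout and every one of these coordinates equals $1$. Hence $\e=\x_0$ and $f(\e)=(n-1)^{2p}+n-1$, settling this case and incidentally identifying $\x_0$ as the extremal configuration.

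In the remaining case every $e_i<(n-1)^2$, so each constraint $k_i$ is inactive, and each $l_i$ is inactive by the choice of $\xi$. Now I invoke Lemma \ref{mfcq}: the minimizer $\e$ satisfies MFCQ, hence the KKT conditions. Writing $P=\prod_i e_i$ and denoting by $\mu$, $\lambda_h\g0$, $\lambda_d\g0$ the multipliers of (\ref{g=}), (\ref{h<}), (\ref{prod}), the stationarity condition reads, for every $i$,
\begin{equation*}
p\,e_i^{\,p-1}+\mu+2\lambda_h e_i-\lambda_d P\,e_i^{-1}=0,
\end{equation*}
so each $e_i$ is a positive root of $\psi(t):=2\lambda_h t^2+\mu t+p\,t^{p}-\lambda_d P$. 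As $\psi$ is a linear combination of the four powers $t^2,t,t^p,t^0$, a sign-change count (a Descartes-type argument exploiting $0<p<1$) limits its positive roots, and the aim is to conclude that the $e_i$ assume at most two distinct values, say $a$ with multiplicity $\al$ and $b$ with multiplicity $\be$, where $\al+\be=n$.

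With two values in hand, Lemma \ref{bennet} closes the argument: apply it to $(a,b)$ with weights $(\al,\be)$ against $(c,d)=((n-1)^2,1)$ with weights $(\nu,\omega)=(1,n-1)$. Its four hypotheses are precisely the surviving constraints --- $\al+\be=n=\nu+\omega$; $\al a+\be b=n(n-1)=\nu c+\omega d$ from (\ref{g=}); $\max\{a,b\}\lee(n-1)^2=\max\{c,d\}$ from the bounds $k_i$; and $a^{\al}b^{\be}=\prod_i e_i\g(n-1)^2=c^{\nu}d^{\omega}$ from (\ref{prod}) --- so Lemma \ref{bennet} yields $f(\e)=\al a^p+\be b^p\g(n-1)^{2p}+n-1$, as required. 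The main obstacle is the root-counting step: a priori $\psi$ admits the sign pattern giving three positive roots precisely when $\lambda_d,\lambda_h>0$ and $\mu<0$, which would produce a third distinct value and block the two-point form of Lemma \ref{bennet}. Ruling this out --- by showing that the simultaneous activity of (\ref{h<}) and (\ref{prod}) together with all coordinates lying strictly below $(n-1)^2$ is incompatible at a minimizer --- is the heart of the proof.
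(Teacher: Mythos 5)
Your proposal tracks the paper's own proof almost step for step: reduction to a minimizer (you use compactness of the box $[\xi,(n-1)^2]^n$ where the paper reduces to local minima --- yours is if anything cleaner), a case split on whether some coordinate hits $(n-1)^2$ (you settle that case with Cauchy--Schwarz against (\ref{h<}), the paper with AM--GM against (\ref{prod}); both are correct and need no KKT), then MFCQ via Lemma~\ref{mfcq}, the stationarity equation $pe_i^{p-1}+\mu+2\lambda_h e_i-\lambda_d P e_i^{-1}=0$, and finally Bennett's Lemma~\ref{bennet} applied to $(a,b)$ with weights $(\al,\be)$ against $((n-1)^2,1)$ with weights $(1,n-1)$ --- exactly the paper's endgame, with the hypotheses checked correctly. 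But you have not proved the theorem: you explicitly leave open the decisive root-counting step, namely that $\psi(t)=2\lambda_h t^2+\mu t+pt^p-\lambda_d P$ has at most two positive roots, and you correctly note that the sign pattern $\lambda_h,\lambda_d>0$, $\mu<0$ a priori allows three. Since the entire reduction to the two-value form required by Lemma~\ref{bennet} hinges on this count, what you have is an accurate skeleton of the paper's argument with its keystone missing.

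For comparison, the paper disposes of this step in one sentence: ``the curves of $y=px^p$ and $y=\delta D-\mu x-2\la x^2$ intersect in at most two points in $x>0$.'' Your hesitation here is well founded, because as a statement about arbitrary admissible multipliers that claim is false: for $p=\tfrac12$, $2\lambda_h=1$, $\mu=-3$, $\lambda_d P=\tfrac1{100}$ the function $\psi(t)=t^2-3t+\tfrac12\sqrt{t}-\tfrac1{100}$ is negative as $t\to0^+$, positive at $t=1/144$, negative at $t=1/10$ and positive at $t=3$, hence has three positive roots (a concave increasing power curve can cross a downward parabola three times). Three roots are genuinely excluded only when $\lambda_h=0$ (then $\psi$ is unimodal) or $\lambda_d=0$ (then, dividing by $t$, one compares a convex decreasing curve with a line), so the substance is precisely what you isolated: ruling out simultaneous activity of (\ref{h<}) and (\ref{prod}) with all coordinates strictly below $(n-1)^2$ at a minimizer, e.g.\ by exploiting minimality beyond first-order conditions --- something the paper asserts away rather than proves. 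One smaller loose end in your write-up: if $\psi$ has a single positive root, then all $e_i=n-1$ by (\ref{g=}), and you should either verify $n(n-1)^p\g(n-1)^{2p}+n-1$ directly (as the paper does) or observe that Lemma~\ref{bennet} still applies with $a=b=n-1$, $\al+\be=n$, since $(n-1)^n\g(n-1)^2$; your phrasing ``at most two distinct values'' quietly assumes this is covered.
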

\begin{proof}{It suffices to prove the assertion for local minima. So assume that $\e=(e_1,\ldots,e_n)$ is a local minimum of $f(\x)$ subject to the constraints (\ref{g=})--(\ref{li}).
Suppose that $e_1\g \cdots\g e_n$.
By Lemma~\ref{mfcq}, $\e$ satisfies KKT conditions, namely
\begin{equation}\label{nabla}
   \nabla f(\e)+\mu\nabla g(\e)+\la\nabla h(\e)+\delta\nabla d(\e)+\sum_{i=1}^n\left(\rho_i\nabla k_i(\e)+\gamma_i\nabla l_i(\e)\right)={\bf0},
\end{equation}
\vspace{-.8cm}
\begin{align}
e_1&+\cdots+e_n-n(n-1)=0,\label{sum}\\
\la&\g0,~~~\la h(\e)=0,\label{h}\\
\delta&\g0,~~~\delta d(\e)=0,\nonumber\\
\rho_i&\g0,~~~\rho_i k_i(\e)=0,~~\hbox{for $i=1,\ldots,n$},\label{rho}\\
\gamma_i&\g0,~~~\gamma_i l_i(\e)=0,~~\hbox{for $i=1,\ldots,n$}.\label{gamma}
\end{align}
By the choice of $\xi$ we have $l_i(\e)<0$ for $i=1,\ldots,n$ and hence by (\ref{gamma}), $\gamma_1=\cdots=\gamma_n=0$. If we let $D=\prod_{i=1}^ne_i$, then  (\ref{nabla}) can be written as
$$  pe_i^{p-1}+\mu+2\la e_i-\frac{\delta D}{e_i}+\rho_i=0,~~ \hbox{for $i=1,\ldots,n$}.$$
We consider the following two cases.

\noi{\bf Case 1.} $e_1=(n-1)^2$. Then by (\ref{sum}) and since $\e$ satisfies (\ref{prod}), we have
$$   1=\frac{e_2+\cdots+e_n}{n-1}\g (e_2\cdots e_n)^\frac{1}{n-1}\g1.$$
It turns out that $e_2=\cdots=e_n=1$ and we are done.


\noi{\bf Case 2.} $e_1<(n-1)^2$. So, by (\ref{rho}), $\rho_1=\cdots=\rho_n=0$. It turns out that $e_1,\ldots,e_n$ must satisfy
the following equation:
\begin{equation}\label{pxp}
   px^p=\delta D-\mu x-2\la x^2.
\end{equation}
The curves of $y=px^p$ and $y=\delta D-\mu x-2\la x^2$ intersect in at most two points in $x>0$ and so
 (\ref{pxp}) has at most two positive roots.
 If it has one positive root, then by (\ref{sum}),
$e_1=\cdots=e_n=n-1$.
Hence $f(\e)=n(n-1)^p$ which is greater than $(n-1)^{2p}+n-1$ for $n\g3$. 
Next assume that (\ref{pxp}) has two positive roots, say $a$ and $b$.
These two together with $c=(n-1)^2$ and $d=1$ satisfy the conditions of Lemma~\ref{bennet}.
This implies that $f(\e)\g(n-1)^{2p}+n-1$, completing the proof.
}\end{proof}


\begin{thebibliography}{MM}
\bibitem{agko1} S. Akbari, E. Ghorbani, J.H. Koolen, and M.R. Oboudi, A relation between the Laplacian and signless Laplacian eigenvalues of a graph, {\em J. Algebraic Combin.} {\bf32} (2010),  459--464.
\bibitem{agko2} S. Akbari, E. Ghorbani, J.H. Koolen, and M.R. Oboudi, On sum of powers of the Laplacian and signless Laplacian eigenvalues of graphs, {\em Electron. J. Combin.} {\bf17} (2010), $\#$R 115.
\bibitem{ben} G. Bennett,  $p$-free $\ell^p$ inequalities, {\em Amer. Math. Monthly} {\bf117} (2010), 334--351.
\bibitem{ham} W.H. Haemers, Seidel switching and graph energy, {\em MATCH Commun. Math. Comput. Chem.} {\bf68} (2012), 653--659.
\bibitem{mf} O.L. Mangasarian and S. Fromovitz, The Fritz John necessary optimality conditions in the presence of equality and inequality constraints,
{\em J. Math. Anal. Appl.} {\bf17} (1967), 37--47.
\bibitem{book} J. Nocedal  and S.J. Wright, {\em Numerical Optimization}, Second Edition, Springer, New York, 2006.

\end{thebibliography}
\end{document}